\newtheorem{theorem}{Theorem}[section]
\newtheorem{cor}[theorem]{Corollary}
\newtheorem{lemma}[theorem]{Lemma}
\newtheorem{proposition}[theorem]{Proposition}
\numberwithin{equation}{subsection}
\newcommand{\A}{\alpha}
\title{
A cotangent sum related to zeros of the Estermann zeta function}
\author{Michael Th. Rassias}
\date{\today}
\address{Department of Mathematics, ETH-Z\"{u}rich, R\"{a}mistrasse 101, 8092 Z\"{u}rich, Switzerland.}
\email{michail.rassias@math.ethz.ch}
\thanks{}
\begin{document}

 \maketitle

\begin{abstract}

We consider a cotangent sum related to Estermann's Zeta function. We provide an elementary and self-contained improvement of the error term in an asymptotic formula proved by V. I. Vasyunin. \\ \\
\textbf{Key words:} Cotangent sums, asymptotic approximation, Taylor expansion, Estermann zeta function, Riemann hypothesis, fractional part.\\
\newline

\end{abstract}

\section{Introduction}
The problem of the computation of zeros of the Estermann zeta function at $s=0$ reduces to the calculation of a specific cotangent sum. However, before we proceed with the presentation of our result related to that cotangent sum, we will exhibit a general framework which motivated the research in this area and explain how this problem is related to fundamental open problems, such as the Riemann Hypothesis.\\
The Estermann zeta function $E\left(s,\frac{h}{k},\alpha\right)$ is defined by the Dirichlet series
$$E\left(s,\frac{h}{k},\alpha\right)=\sum_{n\geq 1}\frac{\sigma_{\alpha}(n) \exp\left(2\pi ihn/k\right)}{n^s}\:, $$
where $h,k$ are positive integers, $(h,k)=1$, $Re\:s> \max\left(1, 1+ Re\:\alpha\right)$, $k\geq 1$ and
$$\sigma_{\alpha}(n)=\sum_{d|n}d^{\alpha}\:.$$
It is a known fact (see \cite{EST}, \cite{ishi2}) that the Estermann zeta function can be continued analytically to a meromorphic function, on the whole complex plane up to two simple poles $s=1$ and $s=1+\alpha$ if $\alpha\neq0$ or a double pole at $s=1$ if $\alpha=0$.\\
In 1995, Ishibashi (see \cite{ISH}) presented a nice result concerning the value of $E\left(s,\frac{h}{k},\alpha\right)$ at $s=0$.
\begin{theorem}(Ishibashi)
Let $k\geq 2$, $1\leq h<k$, $(h,k)=1$, $\alpha\in\mathbb{N}\cup\{0 \}$. Then\\
(1) For even $\A$, it holds
$$	E\left(0,\frac{h}{k},\alpha\right)=\left(-\frac{i}{2}\right)^{\alpha+1}\sum_{m=1}^{k-1}\frac{m}{k}\cot^{(\alpha)}\left(\frac{\pi mh}{k}\right)+\frac{1}{4}\delta_{\alpha,0}\:,$$
where $\delta_{\alpha,0}$ is the Kronecker delta function. \\
(2) For odd $\A$, it holds
$$E\left(0,\frac{h}{k},\alpha\right)=\frac{B_{\A+1}}{2(\A+1)}\:.$$
In the case when $k=1$ we have
$$E\left(0,1,\alpha\right)=\frac{(-1)^{\A+1}B_{\A+1}}{2(\A+1)}\:,$$
where by $B_m$ we denote the $m$-th Bernoulli number.
\end{theorem}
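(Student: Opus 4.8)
\emph{Step 1 (a closed form).} The plan is first to express $E\left(s,\frac hk,\alpha\right)$ as a short finite combination of Hurwitz and periodic zeta functions. In the half-plane of absolute convergence $\Re s>1+\alpha$ I would use $\sigma_\alpha(n)=\sum_{dm=n}m^\alpha$ and interchange summations:
$$E\left(s,\frac hk,\alpha\right)=\sum_{d\geq1}d^{-s}\sum_{m\geq1}m^{\alpha-s}e^{2\pi i hdm/k}=\sum_{d\geq1}d^{-s}\,\mathrm{Li}_{s-\alpha}\bigl(e^{2\pi i hd/k}\bigr),\qquad \mathrm{Li}_w(z):=\sum_{n\geq1}\frac{z^n}{n^w}.$$
The polylogarithm factor depends on $d$ only through $d\bmod k$, so sorting $d$ by residue classes modulo $k$ and summing $d^{-s}$ over each class (a Hurwitz zeta value, the class $0$ contributing $k^{-s}\zeta(s)$) gives
$$E\left(s,\frac hk,\alpha\right)=k^{-s}\left(\zeta(s)\zeta(s-\alpha)+\sum_{\delta=1}^{k-1}\zeta\bigl(s,\tfrac\delta k\bigr)\,\mathrm{Li}_{s-\alpha}\bigl(e^{2\pi i h\delta/k}\bigr)\right).$$
Since $(h,k)=1$ and $1\leq\delta\leq k-1$ we have $e^{2\pi i h\delta/k}\neq1$, so $\mathrm{Li}_w\bigl(e^{2\pi i h\delta/k}\bigr)$ is entire in $w$ (a classical fact). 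Thus the right-hand side is meromorphic on $\C$ with poles only at $s=1,1+\alpha$ and agrees with $E$ for $\Re s>1+\alpha$; by analytic continuation it equals $E$ everywhere, and in particular I may take $s=0$.

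\emph{Step 2 (special values).} Setting $s=0$ (so $k^{-s}=1$) I would substitute $\zeta(0)=-\tfrac12$, $\zeta(0,a)=\tfrac12-a$, $\zeta(-\alpha)=(-1)^\alpha B_{\alpha+1}/(\alpha+1)$, together with the polylogarithm value obtained by iterating $\mathrm{Li}_{w-1}(z)=z\frac{d}{dz}\mathrm{Li}_w(z)$ from $\mathrm{Li}_0(z)=z/(1-z)$. Using the identity $\mathrm{Li}_0\bigl(e^{2\pi i x}\bigr)=\tfrac i2\cot(\pi x)-\tfrac12$ and $z\frac{d}{dz}=\tfrac1{2\pi i}\frac{d}{dx}$ at $z=e^{2\pi i x}$, one gets $\mathrm{Li}_{-\alpha}\bigl(e^{2\pi i x}\bigr)=-\bigl(-\tfrac i2\bigr)^{\alpha+1}\cot^{(\alpha)}(\pi x)$ for $\alpha\geq1$, while for $\alpha=0$ the extra additive constant $-\tfrac12$ is harmless since $\sum_{\delta=1}^{k-1}\bigl(\tfrac12-\tfrac\delta k\bigr)\bigl(-\tfrac12\bigr)=0$. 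After these substitutions the theorem reduces to evaluating
$$E\left(0,\frac hk,\alpha\right)=\zeta(0)\zeta(-\alpha)-\bigl(-\tfrac i2\bigr)^{\alpha+1}\bigl(\tfrac12 T-S\bigr),\qquad T:=\sum_{\delta=1}^{k-1}\cot^{(\alpha)}\!\left(\tfrac{\pi h\delta}{k}\right),\quad S:=\sum_{\delta=1}^{k-1}\tfrac\delta k\,\cot^{(\alpha)}\!\left(\tfrac{\pi h\delta}{k}\right).$$

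\emph{Step 3 (collapsing the finite sums).} Because $h\in\Z$ and $\cot(\pi-y)=-\cot y$, one has $\cot^{(\alpha)}\!\left(\tfrac{\pi h(k-\delta)}{k}\right)=(-1)^{\alpha+1}\cot^{(\alpha)}\!\left(\tfrac{\pi h\delta}{k}\right)$, so the substitution $\delta\mapsto k-\delta$ yields $S=(-1)^{\alpha+1}(T-S)$. For even $\alpha$ this forces $T=0$, hence the $\tfrac12$-weighted term disappears and $E\left(0,\frac hk,\alpha\right)=\zeta(0)\zeta(-\alpha)+\bigl(-\tfrac i2\bigr)^{\alpha+1}S$; here $\zeta(0)\zeta(-\alpha)=\tfrac14$ if $\alpha=0$ and $=0$ if $\alpha\geq2$ (then $B_{\alpha+1}=0$), i.e.\ precisely $\tfrac14\delta_{\alpha,0}$, and on writing $m$ for $\delta$ this is part~(1). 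For odd $\alpha$ the relation reads $2S=T$, hence $\tfrac12 T-S=0$, the entire cotangent contribution cancels, and $E\left(0,\frac hk,\alpha\right)=\zeta(0)\zeta(-\alpha)=\frac{B_{\alpha+1}}{2(\alpha+1)}$, which is part~(2). When $k=1$ the sum is empty and $E(0,1,\alpha)=\zeta(0)\zeta(-\alpha)=(-1)^{\alpha+1}\frac{B_{\alpha+1}}{2(\alpha+1)}$, as claimed.

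\emph{Where the difficulty lies.} The two non-routine points are (i) the passage from the periodic zeta function at non-positive integers to derivatives of $\cot$: one must pin down the constant $-\bigl(-i/2\bigr)^{\alpha+1}$ exactly and treat $\alpha=0$ separately (there $\mathrm{Li}_0$ carries an extra additive constant, and one must use $\zeta(0)=-\tfrac12$ rather than the naive Bernoulli value); and (ii) the elementary but fiddly reindexing in Step~3, where the weight $\tfrac12-\tfrac\delta k$ must be split so that its constant part survives only for odd $\alpha$ (where it is then annihilated) while its linear part survives only for even $\alpha$. Everything else is standard Dirichlet-series manipulation together with substitution of classical special values.
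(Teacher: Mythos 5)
This theorem is not proved in the paper at all: it is quoted verbatim as Ishibashi's result, with the proof delegated to the reference \cite{ISH}, so there is no in-paper argument to compare yours against. Taken on its own terms, your derivation is sound and the constants check out: the decomposition $E\left(s,\frac hk,\alpha\right)=k^{-s}\bigl(\zeta(s)\zeta(s-\alpha)+\sum_{\delta=1}^{k-1}\zeta(s,\delta/k)\,\mathrm{Li}_{s-\alpha}(e^{2\pi ih\delta/k})\bigr)$ is valid in the half-plane of absolute convergence and continues meromorphically; the special values $\zeta(0)=-\tfrac12$, $\zeta(0,a)=\tfrac12-a$, $\zeta(-\alpha)=(-1)^{\alpha}B_{\alpha+1}/(\alpha+1)$ are correct; the identity $\mathrm{Li}_{-\alpha}(e^{2\pi ix})=-(-i/2)^{\alpha+1}\cot^{(\alpha)}(\pi x)$ (with the extra $-\tfrac12$ at $\alpha=0$, which you correctly kill using $\sum_{\delta}(\tfrac12-\tfrac\delta k)=0$) has the right constant; and the parity argument $S=(-1)^{\alpha+1}(T-S)$ correctly forces $T=0$ for even $\alpha$ and $\tfrac12T-S=0$ for odd $\alpha$, reproducing all three formulas including the $k=1$ case $\zeta(0)\zeta(-\alpha)$. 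The one step you should tighten is the one you yourself flag: the relation $\mathrm{Li}_{w-1}(z)=z\frac{d}{dz}\mathrm{Li}_w(z)$ is a termwise identity inside the unit disc, whereas you need the values at $w=-\alpha$ of the function continued in $w$ with $z$ fixed on the unit circle; this is classical but deserves a sentence of justification, e.g.\ either differentiate in $x$ and continue in $w$ (both sides entire, agreement for $\Re w$ large, local uniformity in $x$), or bypass it entirely by writing $\mathrm{Li}_w(e^{2\pi ih\delta/k})$ itself as a finite combination of Hurwitz zeta values and using $\zeta(-n,a)=-B_{n+1}(a)/(n+1)$ together with the finite Fourier expansion relating $\sum_{\delta}e^{2\pi ih\delta/k}B_{n+1}(\delta/k)$ to $\cot^{(n)}(\pi h/k)$. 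With that point made precise, your argument stands as a complete, self-contained proof of the quoted theorem.
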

\noindent Hence for $k\geq 2$, $1\leq h<k$, $(h,k)=1$, it follows that
$$E\left(0,\frac{h}{k},0\right)=\frac{1}{4}+\frac{i}{2}c_0\left(\frac{h}{k}\right)\:,$$
where
$$c_0\left(\frac{h}{k}\right)=-\sum_{m=1}^{k-1}\frac{m}{k}\cot\left(\frac{\pi mh}{k}\right)\:.$$
This is exactly the cotangent sum that we are going to investigate in this paper. A very natural question that rises, is why this specific form of the Estermann zeta function is interesting. One example which demonstrates its importance is the following.\\
\noindent In 1985, R. Balasubramanian, J. Conrey and D. R. Heath-Brown (see \cite{bala}), used properties of $E\left(s,\frac{h}{k},\alpha\right)$ to prove an asymptotic formula for
$$I=\int_0^T\left|\zeta\left(\frac{1}{2}+it\right)\right|^2\left|A\left(\frac{1}{2}+it\right)\right|^2dt\:,$$
where $A(s)$ is a Dirichlet polynomial.\\
Asymptotics for functions of the form of $I$ are useful for theorems which provide a lower bound for the portion of zeros of the Riemann zeta-function on the critical line (see \cite{IWC}, \cite{IKW}). Period functions and families of cotangent sums appear in recent work of S. Bettin and J. B. Conrey (see \cite{BEC}). They generalize the Dedekind sum and share with it the property of satisfying a reciprocity formula. They have proved a reciprocity formula for the Vasyunin sum, which appears in the Nyman-Beurling criterion (see \cite{bag}) for the Riemann Hypothesis.\\
In 1995 Vasyunin (see \cite{VAS}), using Riemann sums, proved in a short elegant way the following theorem.
\begin{theorem}\label{x:vas}(Vasyunin)
For large integer values of $b$, we have
$$c_0\left(\frac{1}{b}\right)=\frac{1}{\pi}\:b\log b-\frac{b}{\pi}(\log 2\pi-\gamma)+O(\log b)\:,$$
where $\gamma$ is the Euler-Mascheroni constant.
\end{theorem}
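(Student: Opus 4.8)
The plan is to specialize the cotangent sum to $h=1$, $k=b$, obtaining
\[
c_0\!\left(\frac1b\right) = -\sum_{m=1}^{b-1}\frac mb\cot\frac{\pi m}{b},
\]
and to treat the right-hand side as (minus) a Riemann-type sum at the nodes $m/b$ for the function $\phi(x)=x\cot(\pi x)$ on $[0,1]$. Near $x=0$ this function is harmless (since $\cot z-1/z$ is analytic at $0$, one has $x\cot\pi x\to 1/\pi$), but as $x\to 1^-$ it behaves like $\frac{1}{\pi(x-1)}+\frac1\pi+O(x-1)$, and it is exactly this non-integrable singularity that will produce the main term $\frac1\pi b\log b$. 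I would therefore subtract the singular part: set $\psi(x)=x\cot(\pi x)-\frac{1}{\pi(x-1)}$ and check, using the Laurent/Taylor expansion of $\cot$ about $0$ (applied near both $x=0$ and $x=1$), that $\psi$ extends to a real-analytic, in particular $C^1$, function on the closed interval $[0,1]$, with $\psi(0)=2/\pi$ and $\psi(1)=1/\pi$.

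With this splitting in hand,
\[
c_0\!\left(\frac1b\right) = -\sum_{m=1}^{b-1}\psi\!\left(\frac mb\right) - \frac b\pi\sum_{m=1}^{b-1}\frac{1}{m-b},
\]
and the second sum is simply $-\sum_{j=1}^{b-1}\frac1j=-H_{b-1}$. The classical estimate $H_{b-1}=\log b+\gamma+O(1/b)$ then shows that the second term on the right contributes $\frac1\pi b\log b+\frac{\gamma}{\pi}b+O(1)$, which already accounts for the main term and for the $\gamma$ appearing in the constant.

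Next I would estimate the first sum. Since $\psi\in C^1[0,1]$, comparing $\sum_{m=1}^{b-1}\psi(m/b)$ with $b\int_0^1\psi(x)\,dx$ (the error controlled, say, by the total variation of $\psi$, or via one step of Euler--Maclaurin) gives $\sum_{m=1}^{b-1}\psi(m/b)=b\int_0^1\psi(x)\,dx+O(1)$; note that even a crude bound here already yields the $O(\log b)$ the theorem asks for. It then remains to evaluate $\int_0^1\psi$, which I would compute as $\lim_{\varepsilon\to0^+}\int_0^{1-\varepsilon}\psi$: integration by parts gives $\int x\cot(\pi x)\,dx=\frac x\pi\log\sin(\pi x)-\frac1\pi\int\log\sin(\pi x)\,dx$, and combining this with the classical evaluation $\int_0^1\log\sin(\pi x)\,dx=-\log 2$ and with $\int\frac{dx}{x-1}=\log|x-1|$, the divergent $\log\varepsilon$ terms cancel, leaving $\int_0^1\psi(x)\,dx=\frac1\pi\log(2\pi)$. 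Hence $-\sum_{m=1}^{b-1}\psi(m/b)=-\frac1\pi b\log(2\pi)+O(1)$.

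Adding the two contributions would then give
\[
c_0\!\left(\frac1b\right)=\frac1\pi b\log b+\frac{\gamma}\pi b-\frac1\pi b\log(2\pi)+O(1)=\frac1\pi b\log b-\frac b\pi(\log 2\pi-\gamma)+O(\log b),
\]
as desired. I expect the main obstacles to be, first, rigorously confirming that $\psi$ really is regular up to \emph{both} endpoints (this is where the analyticity of $\cot z-1/z$ at the origin gets used twice), and second, correctly pinning down the constant $\log 2\pi-\gamma$, which hinges on the two textbook facts $\int_0^1\log\sin(\pi x)\,dx=-\log 2$ and $H_n=\log n+\gamma+o(1)$; granting those, the rest is routine bookkeeping with Riemann sums.
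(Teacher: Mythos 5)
Your argument is sound, but note that the paper itself does not prove this statement at all: Theorem \ref{x:vas} is quoted from Vasyunin, and the paper only remarks that Vasyunin proved it ``using Riemann sums'' --- which is exactly the route you have reconstructed. Your decomposition is correct in all particulars: $x\cot(\pi x)$ has a removable singularity at $0$ and a simple pole of residue $1/\pi$ at $1$, so $\psi(x)=x\cot(\pi x)-\frac{1}{\pi(x-1)}$ is analytic on a neighbourhood of $[0,1]$ with $\psi(0)=2/\pi$, $\psi(1)=1/\pi$; the pole part sums to $\frac{b}{\pi}H_{b-1}=\frac{1}{\pi}b\log b+\frac{\gamma}{\pi}b+O(1)$; the Riemann-sum comparison for a $C^1$ function costs only $O(1)$; and the cancellation of the $\log\varepsilon$ terms against $\int_0^1\log\sin(\pi x)\,dx=-\log 2$ does give $\int_0^1\psi=\frac{1}{\pi}\log 2\pi$. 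In fact your estimate delivers the error term $O(1)$, i.e.\ the paper's main theorem, not merely the $O(\log b)$ claimed here, and it determines the constant $\log 2\pi-\gamma$ directly. This is worth contrasting with the paper's own method: there, $c_0(1/b)$ is rewritten via fractional parts as $\frac{1}{\pi}\sum_{a\geq 1,\,b\nmid a}\frac{b(1-2\{a/b\})}{a}$, the partial sums $G_L(b)$ are expanded through the harmonic-sum lemmas, and one obtains $c_0(1/b)=\frac{1}{\pi}b\log b+\frac{2bC_0}{\pi}+O(1)$ with an \emph{unevaluated} constant $C_0$, which is then identified as $\frac{\gamma-\log 2\pi}{2}$ only by appealing to Vasyunin's theorem; your approach buys the explicit constant (so it would make the identification of $C_0$ self-contained), while the paper's approach buys an argument avoiding the singularity analysis of the cotangent and working purely with elementary sums.
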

\noindent In this paper we provide an elementary and self-contained improvement of the error term in Vasyunin's asymptotic formula. Namely, we prove the following:
\begin{theorem}
For large integer values of $b$, we have
$$c_0\left(\frac{1}{b}\right)=\frac{1}{\pi}\:b\log b-\frac{b}{\pi}(\log 2\pi-\gamma)+O(1)\:.$$
\end{theorem}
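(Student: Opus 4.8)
The plan is to work directly from the identity $c_0(1/b) = -\sum_{m=1}^{b-1}\frac{m}{b}\cot\!\left(\frac{\pi m}{b}\right)$ and to peel off the arithmetically trivial polar behaviour of the cotangent from a genuinely smooth remainder. Since $\pi\cot(\pi x)$ has simple poles with residue $1$ at the integers, and its only poles in a neighbourhood of $[0,1]$ sit at $x=0$ and $x=1$, the function
$$g(x) := \cot(\pi x) - \frac{1}{\pi x} - \frac{1}{\pi(x-1)}$$
extends to a real-analytic (hence $C^{\infty}$) function on the closed interval $[0,1]$; this follows from the Taylor expansion $\pi\cot(\pi x) = \frac1x - \frac{\pi^2 x}{3} - \cdots$ near $x=0$ together with the same expansion in $x-1$ near $x=1$. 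Setting $x=m/b$ and multiplying by $m/b$ gives, for $1\le m\le b-1$,
$$\frac{m}{b}\cot\!\left(\frac{\pi m}{b}\right) = \frac{1}{\pi} + \frac{m}{\pi(m-b)} + \frac{m}{b}\,g\!\left(\frac{m}{b}\right),$$
so that $-c_0(1/b)$ breaks into three sums to be estimated separately.

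First I would dispose of the two elementary sums. Clearly $\sum_{m=1}^{b-1}\frac{1}{\pi} = \frac{b-1}{\pi}$, while writing $\frac{m}{m-b}=1+\frac{b}{m-b}$ and re-indexing by $j=b-m$ yields
$$\sum_{m=1}^{b-1}\frac{m}{\pi(m-b)} = \frac{1}{\pi}\Bigl((b-1) - b\,H_{b-1}\Bigr),\qquad H_{b-1}:=\sum_{j=1}^{b-1}\frac{1}{j}.$$
Inserting the classical estimate $H_{b-1} = \log b + \gamma + O(1/b)$ already produces the leading term $-\frac{1}{\pi} b\log b$, the term $-\frac{b}{\pi}\gamma$, and some additional $O(b)$ contributions that will recombine at the end; all genuine error here is $O(1)$.

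The remaining, and I expect principal, difficulty is the third sum. Because $x\mapsto x\,g(x)$ is $C^{1}$ (indeed analytic) on $[0,1]$, a standard Riemann-sum estimate — equivalently Euler--Maclaurin — gives
$$\sum_{m=1}^{b-1}\frac{m}{b}\,g\!\left(\frac{m}{b}\right) = b\int_0^1 x\,g(x)\,dx + O(1).$$
Since this contributes at order $b$, the coefficient of $b$ in the final answer depends on the \emph{exact} value of $\int_0^1 x\,g(x)\,dx$, and an inexact evaluation here would be fatal; this is the heart of the improvement over the weaker $O(\log b)$ bound of Theorem \ref{x:vas}. I would compute it by integrating $\int_0^{1-\varepsilon} x\cot(\pi x)\,dx$ by parts against the antiderivative $\frac{1}{\pi}\log\sin(\pi x)$, combining with the individually divergent but jointly convergent contributions of $\frac{1}{\pi x}$ and $\frac{1}{\pi(x-1)}$, letting $\varepsilon\to 0$, and invoking the classical value $\int_0^1\log\sin(\pi x)\,dx = -\log 2$. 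This gives
$$\int_0^1 x\,g(x)\,dx = \frac{\log 2\pi - 2}{\pi}.$$

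Finally I would collect the three pieces. Up to $O(1)$,
$$-c_0\!\left(\frac1b\right) = -\frac{1}{\pi} b\log b + \frac{b}{\pi}\Bigl(1 + (1-\gamma) + (\log 2\pi - 2)\Bigr) + O(1),$$
and since $1+(1-\gamma)+(\log 2\pi - 2) = \log 2\pi - \gamma$, this rearranges to $c_0(1/b) = \frac{1}{\pi} b\log b - \frac{b}{\pi}(\log 2\pi - \gamma) + O(1)$, as claimed. The only two points requiring real care are: (i) the $C^{\infty}$-extension of $g$ across the endpoints $0$ and $1$, which is exactly what makes the Riemann-sum error genuinely $O(1)$ rather than $O(\log b)$; and (ii) the closed-form evaluation of $\int_0^1 x\,g(x)\,dx$, where one must track the cancellation of the $\log\varepsilon$ divergences carefully. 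Everything else is routine bookkeeping.
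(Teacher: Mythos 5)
Your argument is correct, and it takes a genuinely different route from the paper. The paper never touches the cotangent sum directly: it first converts $c_0(1/b)$ into the arithmetic series $\frac{1}{\pi}\sum_{a\geq 1,\, b\nmid a}\frac{b(1-2\{a/b\})}{a}$ (via the exponential-sum representation of $\lfloor a/b\rfloor$ and the Fourier series of $\frac{\pi-\theta}{2}$), truncates at $L$ with $b\mid L$, and estimates $S(L;b)=2b\sum_{a\leq L}\frac{1}{a}\lfloor a/b\rfloor$ by expanding $\log\frac{(k+1)b-1}{kb-1}$; this yields $c_0(1/b)=\frac{1}{\pi}b\log b+\frac{2bC_0}{\pi}+O(1)$ with an \emph{unidentified} constant $C_0$, which is then pinned down as $2C_0=\gamma-\log 2\pi$ only by comparison with Vasyunin's Theorem \ref{x:vas}. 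You instead stay with $c_0(1/b)=-\sum_{m=1}^{b-1}\frac{m}{b}\cot\left(\frac{\pi m}{b}\right)$, strip the poles of $\cot(\pi x)$ at $0$ and $1$ so that $x\,g(x)$ is analytic on $[0,1]$, handle the polar pieces by the harmonic-number asymptotic (producing $b\log b$ and $\gamma$), and treat the smooth piece as a Riemann sum with $O(1)$ error, evaluating $\int_0^1 x\,g(x)\,dx=\frac{\log 2\pi-2}{\pi}$ via $\int_0^1\log\sin(\pi x)\,dx=-\log 2$; I checked the pole subtraction, the re-indexed harmonic sum, the integral value, and the final bookkeeping $1+(1-\gamma)+(\log 2\pi-2)=\log 2\pi-\gamma$, and all are right. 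What each approach buys: yours determines the constant $-\frac{1}{\pi}(\log 2\pi-\gamma)$ intrinsically and so does not need Vasyunin's asymptotic at all (making it more self-contained than the paper on this point), at the cost of invoking the classical log-sine integral; the paper's route is purely series-manipulative and elementary, but only delivers the shape $\frac{1}{\pi}b\log b+\frac{2bC_0}{\pi}+O(1)$ and must borrow the value of the linear coefficient from Theorem \ref{x:vas}. The only items you should write out in full to make the proof complete are the $\varepsilon\to 0$ cancellation of the $\log\varepsilon$ terms in the integration by parts and the one-line Riemann-sum error bound for the $C^1$ function $x\,g(x)$; both go through exactly as you indicate.
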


\section{Construction and some approximations for $c_0(1/b)$}
\vspace{5mm}
It is evident that for $a$, $b\in\mathbb{N}$, $b \geq 2$, the integer part $\left\lfloor a/b \right\rfloor$ is equal to the number of integers between $1$ and $a$ which are divisible by $b$. However, it is a basic fact that
\begin{equation}
\frac{1}{b}\sum_{m=0}^{b-1}e^{2\pi imk/b}=\left\{
\begin{array}{l l}
    1\:, & \quad \text{if}\ b\;|\;k\\
    0\:, & \quad \text{otherwise}\:.\\
  \end{array} \right.
 \nonumber
\end{equation}
Hence, it follows that
$$\left\lfloor \frac{a}{b}\right\rfloor=\frac{1}{b}\sum_{k=1}^{a}\sum_{m=0}^{b-1}e^{2\pi imk/b}\:.$$
But
$$\frac{e^{2\pi im/b}}{e^{2\pi im/b}-1}=\frac{1}{2}-\frac{i}{2}\cot\left(\frac{\pi m}{b}\right)\:,$$
since
$$\cot x=\frac{i(e^{ix}+e^{-ix})}{e^{ix}-e^{-ix}}\:.$$
So
$$\left\lfloor \frac{a}{b}\right\rfloor=\frac{L}{b}+\frac{1}{b}\sum_{m=1}^{b-1}\left(\frac{1}{2}-\frac{i}{2}\cot \left(\frac{\pi m}{b}\right)\right)\left(e^{2\pi ima/b}-1\right)\:,$$
where
$$L=\lim_{x\rightarrow0}\frac{e^{2\pi ixa/b}-1}{e^{2\pi ix/b}-1}=a\:.$$
Thus
\begin{eqnarray}
\left\lfloor \frac{a}{b}\right\rfloor&=&\frac{L}{b}-\frac{1}{b}\sum_{m=1}^{b-1}\left(\frac{1}{2}-\frac{i}{2}\cot \left(\frac{\pi m}{b}\right)\right)+\frac{1}{b}\sum_{m=1}^{b-1}\left(\frac{1}{2}-\frac{i}{2}\cot \left(\frac{\pi m}{b}\right)\right)e^{2\pi ima/b}\nonumber\\
&=&\frac{a}{b}-\frac{b-1}{2b}+\frac{i}{2b}\sum_{m=1}^{b-1}\cot \left(\frac{\pi m}{b}\right)+\frac{1}{b}\sum_{m=1}^{b-1}\left(\frac{1}{2}-\frac{i}{2}\cot \left(\frac{\pi m}{b}\right)\right)e^{2\pi ima/b}\nonumber\\
&=&\frac{a}{b}+\frac{1}{2b}-\frac{1}{2}+\frac{1}{2b}\sum_{m=1}^{b-1}\left(1-i\cot\left(\frac{\pi m}{b}\right)\right)e^{2\pi ima/b}\:,\nonumber
\end{eqnarray}
since
$$\sum_{m=1}^{b-1}\cot \left(\frac{\pi m}{b}\right)=0\:.$$
Hence, we obtain
\begin{eqnarray}
x_n:=\left\{\frac{na}{b}\right\}&=&\frac{na}{b}-\left\lfloor \frac{na}{b}\right\rfloor\nonumber\\
&=&\frac{na}{b}-\left(\frac{na}{b}+\frac{1}{2b}-\frac{1}{2}+\frac{1}{2b}\sum_{m=1}^{b-1}e^{2\pi imna/b}-\frac{i}{2b}\sum_{m=1}^{b-1}\cot\left(\frac{\pi m}{b}\right)e^{2\pi imna/b}\right)\:.\nonumber
\end{eqnarray}
(For some nice applications of fractional parts, see \cite{fur}.)\\
We can write
$$x_n=\left(\frac{1}{2}-\frac{1}{2b}-\frac{1}{2b}\sum_{m=1}^{b-1}e^{2\pi imna/b}\right)+\frac{i}{2b}\sum_{m=1}^{b-1}\cot\left(\frac{\pi m}{b}\right)e^{2\pi imna/b}\:.$$
Let
$$T=\frac{i}{2b}\sum_{m=1}^{b-1}\cot\left(\frac{\pi m}{b}\right)e^{2\pi imna/b}\:.$$
Then we get
\begin{eqnarray}
T&=&\frac{i}{2b}\sum_{m=1}^{b-1}\left[\cot\left(\frac{\pi m}{b}\right)\cos\left(2\pi mn\frac{a}{b}\right)+i\cot\left(\frac{\pi m}{b}\right)\sin\left(2\pi mn\frac{a}{b}\right)\right]\nonumber\\
&=&-\frac{1}{2b}\sum_{m=1}^{b-1}\cot\left(\frac{\pi m}{b}\right)\sin\left(2\pi mn\frac{a}{b}\right)+\frac{i}{2b}\sum_{m=1}^{b-1}\cot\left(\frac{\pi m}{b}\right)\cos\left(2\pi mn\frac{a}{b}\right)\:.\nonumber
\end{eqnarray}
Therefore,
\begin{eqnarray}
x_n&=&\left[\frac{1}{2}-\frac{1}{2b}-\frac{1}{2b}\sum_{m=1}^{b-1}e^{2\pi imna/b}-\frac{1}{2b}\sum_{m=1}^{b-1}\cot\left(\frac{\pi m}{b}\right)\sin\left(2\pi mn\frac{a}{b}\right)\right]\nonumber\\
&+&\frac{i}{2b}\sum_{m=1}^{b-1}\cot\left(\frac{\pi m}{b}\right)\cos\left(2\pi mn\frac{a}{b}\right)\:.\nonumber
\end{eqnarray}
But, since $x_n\in\mathbb{R}$ and
\begin{equation}
\sum_{m=1}^{b-1}e^{2\pi imna/b}=\left\{
\begin{array}{l l}
    -1\:, & \quad \text{if $b\nmid na$}\vspace{2mm}\\
    b-1\:, & \quad \text{otherwise}\:,\\
  \end{array} \right.
  \nonumber
\end{equation}
we obtain the following Proposition.

\begin{proposition}
For every $a$, $b$, $n\in\mathbb{N}$, $b \geq 2$, we have
$$\sum_{m=1}^{b-1}\cot\left(\frac{\pi m}{b}\right)\cos\left(2\pi mn\frac{a}{b}\right)=0\:.$$
If $b\nmid na$ then we also have
$$x_n=\frac{1}{2}-\frac{1}{2b}\sum_{m=1}^{b-1}\cot\left(\frac{\pi m}{b}\right)\sin\left(2\pi mn\frac{a}{b}\right)\:.$$
\end{proposition}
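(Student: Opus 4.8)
The plan is to exploit the fact that $x_n=\{na/b\}$ is by construction a real number, and to read off both assertions directly from the explicit real/imaginary decomposition of $x_n$ established in the lines just above the statement.

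First I would isolate the imaginary part. In the final displayed expression for $x_n$, everything inside the square brackets is manifestly real, so the entire imaginary contribution to $x_n$ is $\frac{i}{2b}\sum_{m=1}^{b-1}\cot\left(\frac{\pi m}{b}\right)\cos\left(2\pi mn\frac{a}{b}\right)$. Since $x_n\in\mathbb{R}$, this quantity must vanish, which is exactly the first claimed identity. As a self-contained check that needs none of the preceding machinery, one can instead substitute $m\mapsto b-m$ in the sum: this sends $\cot\left(\frac{\pi m}{b}\right)$ to $-\cot\left(\frac{\pi m}{b}\right)$, while $\cos\left(2\pi(b-m)n\frac{a}{b}\right)=\cos\left(2\pi mn\frac{a}{b}\right)$ because $na\in\mathbb{Z}$ makes $2\pi na$ a multiple of $2\pi$; hence the sum equals its own negative and is therefore $0$.

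Next I would treat the real part of $x_n$ under the hypothesis $b\nmid na$. Here the orthogonality relation quoted in the construction gives $\frac{1}{b}\sum_{m=0}^{b-1}e^{2\pi imna/b}=0$, hence $\sum_{m=1}^{b-1}e^{2\pi imna/b}=-1$. Substituting this value into the bracketed real part of the formula for $x_n$, the two constant-type contributions combine as $-\frac{1}{2b}-\frac{1}{2b}\sum_{m=1}^{b-1}e^{2\pi imna/b}=-\frac{1}{2b}+\frac{1}{2b}=0$, leaving precisely $x_n=\frac{1}{2}-\frac{1}{2b}\sum_{m=1}^{b-1}\cot\left(\frac{\pi m}{b}\right)\sin\left(2\pi mn\frac{a}{b}\right)$, as asserted.

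I expect no serious obstacle: both parts follow immediately from the decomposition already derived together with the elementary geometric-sum evaluations. The only point deserving care is the bookkeeping of the constants $\frac12$, $-\frac{1}{2b}$ and the geometric sum in the case $b\nmid na$; and it is worth noting why the hypothesis $b\nmid na$ is needed for the second formula, namely that in the complementary case $b\mid na$ one has $x_n=0$, $\sum_{m=1}^{b-1}e^{2\pi imna/b}=b-1$, and every $\sin\left(2\pi mn\frac{a}{b}\right)=0$, so the clean form above degenerates and the identity reduces to $0=\frac12-\frac{1}{2b}-\frac{1}{2b}(b-1)$.
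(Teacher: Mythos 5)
Your proposal is correct and follows essentially the same route as the paper: take the imaginary part of the derived decomposition of $x_n$ (using that $x_n\in\mathbb{R}$ and that the geometric sum $\sum_{m=1}^{b-1}e^{2\pi imna/b}$ is real, equal to $-1$ or $b-1$) to get the vanishing cosine sum, and substitute the value $-1$ when $b\nmid na$ to obtain the formula for $x_n$. Your extra symmetry check $m\mapsto b-m$ and the verification of the degenerate case $b\mid na$ are sound additions but not needed beyond what the paper does.
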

\noindent Thus, for every $a$, $b\in\mathbb{N}$, with $b\nmid a$, it holds
$$x_1=\frac{1}{2}-\frac{1}{2b}\sum_{m=1}^{b-1}\cot\left(\frac{\pi m}{b}\right)\sin\left(2\pi m\frac{a}{b}\right)\:.$$
Hence, we can write
\begin{eqnarray}
\sum_{\substack{a\geq 1 \\ b\nmid a}}\frac{b(1-2x_1)}{a}&=&\sum_{\substack{a\geq 1 \\ b\nmid a}}\frac{1}{a}\sum_{m=1}^{b-1}\cot\left(\frac{\pi m}{b}\right)\sin\left(2\pi m\frac{a}{b}\right)\nonumber\\
&=&\sum_{a\geq 1} \sum_{m=1}^{b-1}\cot\left(\frac{\pi m}{b}\right)\frac{\sin\left(2\pi m\frac{a}{b}\right)}{a}\:.\nonumber
\end{eqnarray}
However, since
\[
\sum_{a\geq 1}\frac{\sin(a\theta)}{a}=\frac{\pi-\theta}{2},\:0<\theta<2\pi\tag{S}
\]
we obtain the following proposition.
\begin{proposition}$\label{x:ena}$
For every positive integer $b$, $b\geq 2$, we have
\[
c_0\left(\frac{1}{b}\right)=\frac{1}{\pi}\sum_{\substack{a\geq1 \\ b\nmid a}}\frac{b(1-2\{a/b\})}{a}\:.\tag{1}
\]
\end{proposition}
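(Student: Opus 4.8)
The plan is to feed the second identity of the preceding Proposition into the series $\sum_{a\geq1,\,b\nmid a} b(1-2\{a/b\})/a$ and evaluate it. Since for $b\nmid a$ we have $\{a/b\}=\frac{1}{2}-\frac{1}{2b}\sum_{m=1}^{b-1}\cot(\pi m/b)\sin(2\pi ma/b)$, the numerator equals $b(1-2\{a/b\})=\sum_{m=1}^{b-1}\cot(\pi m/b)\sin(2\pi ma/b)$. First I would check that the series on the right of the asserted formula converges: the coefficient $a\mapsto b(1-2\{a/b\})$ is $b$-periodic with vanishing mean over a period, since $\sum_{a=1}^{b-1}(b-2a)=0$, so convergence follows by Abel summation. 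Next, since the sine factors vanish whenever $b\mid a$, the restricted sum is unchanged if we sum over all $a\geq1$, giving $\sum_{a\geq1}\sum_{m=1}^{b-1}\cot(\pi m/b)\,\frac{\sin(2\pi ma/b)}{a}$.

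The $m$-range being finite, I would then interchange the two summations --- a finite linear combination of convergent series converges to that combination of the limits --- to get $\sum_{m=1}^{b-1}\cot(\pi m/b)\sum_{a\geq1}\frac{\sin(2\pi ma/b)}{a}$. Here I apply identity (S) with $\theta=2\pi m/b$: since $1\leq m\leq b-1$ we have $0<\theta<2\pi$, so the inner sum equals $\frac{\pi-2\pi m/b}{2}=\frac{\pi(b-2m)}{2b}$. The whole expression becomes $\frac{\pi}{2b}\sum_{m=1}^{b-1}(b-2m)\cot(\pi m/b)$, and using $\sum_{m=1}^{b-1}\cot(\pi m/b)=0$ (already noted above) the term carrying the factor $b$ drops out, leaving $-\frac{\pi}{b}\sum_{m=1}^{b-1}m\cot(\pi m/b)=\pi\left(-\sum_{m=1}^{b-1}\frac{m}{b}\cot(\pi m/b)\right)=\pi\,c_0(1/b)$ by the definition of $c_0$. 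Dividing by $\pi$ yields the Proposition.

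The one step that needs care is the interchange of summations together with the conditional convergence of $\sum_{a\geq1}\sin(2\pi ma/b)/a$: I must first extend the outer sum to all $a$, so that each inner $a$-series is exactly the one evaluated by (S), and then use the finiteness of the $m$-sum to make the swap unconditional. Everything else reduces to bookkeeping with the two elementary cotangent identities $\sum_{m=1}^{b-1}\cot(\pi m/b)=0$ and $\sum_{m=1}^{b-1}m\cot(\pi m/b)=-b\,c_0(1/b)$.
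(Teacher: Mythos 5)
Your proposal is correct and follows essentially the same route as the paper: substitute the fractional-part identity from the preceding Proposition, extend the sum over all $a$ (the sines vanish when $b\mid a$), swap the finite $m$-sum with the $a$-series, and evaluate with the identity (S) together with $\sum_{m=1}^{b-1}\cot(\pi m/b)=0$. Your explicit attention to convergence and to the legitimacy of the interchange only makes precise what the paper leaves implicit.
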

\noindent If we substitute $\left\{a/b\right\}$ in (1) by $a/b-\left\lfloor a/b\right\rfloor$ and carry out the calculations, we can express $c_0(1/b)$ in the equivalent form
$$ c_0\left(\frac{1}{b}\right)=\frac{1}{\pi}\sum_{\substack{a\geq1 \\ b\nmid a}}\left[\frac{b}{a}\left(1+2\left\lfloor \frac{a}{b}\right\rfloor\right)-2 \right]$$
Set
$$G_L(b)=\sum_{\substack{1\leq a\leq L \\ b\nmid a}}\left(\frac{b}{a}\left(1+2\left\lfloor \frac{a}{b}\right\rfloor\right)-2\right),$$
then
\begin{eqnarray}
G_L(b)&=&\sum_{1\leq a\leq L}\left(\frac{b}{a}\left(1+2\left\lfloor \frac{a}{b}\right\rfloor\right)-2\right)-\sum_{\substack{1\leq a\leq L \\ b|a}}\left(\frac{b}{a}\left(1+2\left\lfloor \frac{a}{b}\right\rfloor\right)-2\right)\nonumber\\
&=&\sum_{1\leq a\leq L}\left(\frac{b}{a}\left(1+2\left\lfloor \frac{a}{b}\right\rfloor\right)-2\right)-b\sum_{\substack{1\leq a\leq L \\ b|a}}\frac{1}{a}.\nonumber
\end{eqnarray}
But,
\begin{eqnarray}
\sum_{\substack{1\leq a\leq L \\ b|a}}\frac{1}{a}&=&\frac{1}{b}+\frac{1}{2b}+\cdots+\frac{1}{\left\lfloor L/b\right\rfloor b}=\frac{1}{b}\sum_{1\leq k\leq\left\lfloor L/b\right\rfloor}\frac{1}{k}.\nonumber
\end{eqnarray}
Since for every positive real number $x$ it holds
\[
\sum_{1\leq n\leq x}\frac{1}{n}=\log x+\gamma+O\left( \frac{1}{x} \right)=\log x+O(1),\tag{2}
\]
it follows that
\[
\sum_{\substack{1\leq a\leq L \\ b|a}}\frac{1}{a}=\frac{1}{b}\log \left\lfloor \frac{L}{b}\right\rfloor +O\left(\frac{1}{b}  \right)
\]
Therefore, we obtain the following lemma.
\begin{lemma}$\label{x:G}$
For every $b$, $L\in\mathbb{N}$, with $b$, $L\geq 2$, it holds
\[
G_L(b)=-\log\frac{L}{b}+b(\log L+\gamma)-2L+2b\sum_{1\leq a\leq L}\frac{1}{a}\left\lfloor \frac{a}{b}\right\rfloor+
O\left(\frac{b}{L}  \right).
\]
\end{lemma}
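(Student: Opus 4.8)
The plan is purely computational: substitute the two facts already in hand — the decomposition of $G_L(b)$ displayed just above the lemma, and formula (2) — and then collect terms. Throughout, the sum $2b\sum_{1\le a\le L}\frac1a\lfloor a/b\rfloor$ is carried along untouched; it is the genuinely delicate object, to be estimated in the following sections, so the role of this lemma is just to extract it cleanly.

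First I would split the first sum appearing in the decomposition of $G_L(b)$ as
\[
\sum_{1\le a\le L}\left(\frac{b}{a}\Bigl(1+2\bigl\lfloor a/b\bigr\rfloor\Bigr)-2\right)
= b\sum_{1\le a\le L}\frac1a\;+\;2b\sum_{1\le a\le L}\frac{\lfloor a/b\rfloor}{a}\;-\;2L,
\]
where the last term is exact since $L\in\mathbb{N}$. Applying (2) to the plain harmonic sum gives $b\sum_{1\le a\le L}\frac1a = b\log L + b\gamma + O(b/L)$. For the term subtracted in the decomposition, I would use that the multiples of $b$ not exceeding $L$ are $b,2b,\dots,\lfloor L/b\rfloor b$, so that $\sum_{b\mid a,\,1\le a\le L}\frac1a=\frac1b\sum_{1\le k\le\lfloor L/b\rfloor}\frac1k$; a second use of (2) then turns $b$ times this quantity into $\log\lfloor L/b\rfloor$ plus a bounded amount plus $O(b/L)$ once $L$ is large compared with $b$, which is the relevant range.

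The only small point requiring attention — and it is not really an obstacle — is passing from $\log\lfloor L/b\rfloor$ to $\log(L/b)$. Writing $\lfloor L/b\rfloor = (L/b)\bigl(1-b\{L/b\}/L\bigr)$ and using $0\le b\{L/b\}/L< b/L$, one gets $\log\lfloor L/b\rfloor = \log(L/b) + \log\bigl(1-b\{L/b\}/L\bigr) = \log(L/b) + O(b/L)$, valid as soon as, say, $L\ge 2b$. Substituting the three evaluated pieces back into the decomposition of $G_L(b)$ and simplifying, one obtains the stated identity
\[
G_L(b)=-\log\frac{L}{b}+b(\log L+\gamma)-2L+2b\sum_{1\le a\le L}\frac1a\Bigl\lfloor\frac{a}{b}\Bigr\rfloor+O\!\left(\frac{b}{L}\right).
\]
I expect no real difficulty here: every ingredient is either an exact identity or formula (2), and the content of the lemma is simply the bookkeeping that isolates the sum $2b\sum_{1\le a\le L}\frac1a\lfloor a/b\rfloor$ for the next stage.
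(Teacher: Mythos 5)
Your proof is correct and follows essentially the same route as the paper: split off the multiples of $b$ (for which the summand collapses to $b/a$), evaluate both harmonic sums by formula (2), and trade $\log\lfloor L/b\rfloor$ for $\log(L/b)$ at a cost of $O(b/L)$ --- a step you in fact justify more explicitly than the paper does. The one caveat, shared with the paper's own derivation, is that the subtracted harmonic sum leaves behind a bounded constant (essentially $-\gamma$) which does not fit inside the stated $O(b/L)$ error; this is harmless, since every subsequent estimate in the paper carries an $O(1)$ term anyway.
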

\noindent We shall approximate the sum
$$S(L;b)=2b\sum_{1\leq a\leq L}\frac{1}{a}\left\lfloor \frac{a}{b}\right\rfloor$$
up to a constant error and hence improve the asymptotic approximation of $c_0\left(1/b\right)$ by replacing Vasyunin's error term $O(\log b)$ by $O(1)$.\\
\textbf{Remark.} In the sequel, we always assume that $b|L$.
\begin{lemma}$\label{x:A}$
\begin{eqnarray}
S(L;b)=2b\sum_{k\leq L/b}k\left(\log\frac{(k+1)b-1}{kb-1}+\frac{1}{2}F_1(k)-\frac{1}{12}F_2(k)+O\left( \frac{1}{k^4b^4} \right)\right) ,\nonumber
\end{eqnarray}
where
$$F_i(k)=\frac{1}{((k+1)b-1)^i}-\frac{1}{(kb-1)^i}\:.$$
\end{lemma}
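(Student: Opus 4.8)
The plan is to regroup the sum defining $S(L;b)$ according to the value of $\lfloor a/b\rfloor$ and then to expand each resulting block of $b$ consecutive reciprocals by Euler--Maclaurin summation. Writing $L/b\in\mathbb{N}$, the integers $1\le a\le L$ with $\lfloor a/b\rfloor=k$ are precisely $a=kb,kb+1,\dots,(k+1)b-1$ (a complete block for each $1\le k<L/b$), while $k=0$ contributes $0$; so, up to the routine treatment of the incomplete block at the top end and a bounded additive term, the lemma reduces to the single per-block estimate
$$\sum_{a=kb}^{(k+1)b-1}\frac1a=\log\frac{(k+1)b-1}{kb-1}+\frac12 F_1(k)-\frac1{12}F_2(k)+O\!\left(\frac1{k^4b^4}\right)\qquad(k\ge1).$$
Multiplying this by $2bk$ and summing over $k$ then gives the claimed expression for $S(L;b)$.

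To prove the per-block estimate I would apply the Euler--Maclaurin formula to $f(t)=1/t$ on the interval $[\,kb-1,\,(k+1)b-1\,]$ (equivalently, to $g(j)=1/(kb-1+j)$ for $j=1,\dots,b$ after subtracting the $j=0$ term); this is legitimate since $kb-1\ge1$ whenever $k\ge1$ and $b\ge2$. Retaining the integral, the endpoint term and the first Bernoulli correction gives: $\int_{kb-1}^{(k+1)b-1}dt/t=\log\frac{(k+1)b-1}{kb-1}$; the endpoint contribution $\tfrac12\big(\tfrac1{kb-1}+\tfrac1{(k+1)b-1}\big)$, after cancelling the spurious $\tfrac1{kb-1}$ produced by the index shift, becomes $\tfrac12\big(\tfrac1{(k+1)b-1}-\tfrac1{kb-1}\big)=\tfrac12 F_1(k)$; and the $B_2$-term $\tfrac{B_2}{2!}\big(f'((k+1)b-1)-f'(kb-1)\big)$ equals $\tfrac1{12}\big(\tfrac1{(kb-1)^2}-\tfrac1{((k+1)b-1)^2}\big)=-\tfrac1{12}F_2(k)$. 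Choosing the endpoints $kb-1$ and $(k+1)b-1$, rather than $kb$ and $(k+1)b-1$, is exactly what makes these three pieces collapse to the stated closed form. For the remainder I would use that $f^{(4)}(t)=24/t^5$ has constant sign on the interval, so the Euler--Maclaurin error after the $B_2$-term is at most the first omitted term in absolute value, namely $\tfrac1{120}\big(\tfrac1{(kb-1)^4}-\tfrac1{((k+1)b-1)^4}\big)$; since $kb-1\ge kb/2$, a telescoping bound for this difference of fourth powers shows it is $O(1/(k^5b^4))\subseteq O(1/(k^4b^4))$, uniformly in $k$.

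The delicate point is the size of the error: one must verify it really is $O(1/(k^4b^4))$ and not merely $O(1/(k^4b^3))$. The reason is structural — because $f(t)=1/t$, only even inverse powers of the endpoints occur among the Euler--Maclaurin terms, so the would-be $O(1/(k^4b^3))$ contribution (a multiple of $\tfrac1{(kb-1)^3}-\tfrac1{((k+1)b-1)^3}$) is simply absent. If one prefers to keep the argument fully self-contained and avoid citing Euler--Maclaurin, the same identity follows by elementary telescoping together with the logarithm's power series: from $\frac1a=\log\frac{a}{a-1}-\sum_{j\ge2}\frac1{ja^j}$ the logarithms telescope to $\log\frac{(k+1)b-1}{kb-1}$, and for each $j\ge2$ one rewrites $\sum_a a^{-j}$ by the telescoping sum $\frac1{j-1}\sum_a\big(\tfrac1{(a-1)^{j-1}}-\tfrac1{a^{j-1}}\big)$ plus a remainder of higher inverse powers; collecting the resulting contributions to $F_1(k)$, and to $F_2(k)$ — the latter receiving $-\tfrac14 F_2(k)$ from the $a^{-2}$ sum and $+\tfrac16 F_2(k)$ from the $a^{-3}$ sum, which sum to $-\tfrac1{12}F_2(k)$ — produces the stated coefficients, while everything left over is dominated by $\sum_{a=kb}^{(k+1)b-1}a^{-5}=O(1/(k^5b^4))$ once one checks that the $a^{-4}$ contributions cancel (the coefficient $\tfrac12-\tfrac14$ from the main expansion being exactly offset by $\tfrac16\cdot(-\tfrac32)$ from reducing the $a^{-3}$ sum). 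This cancellation, and the corresponding bookkeeping, is the heart of the computation.
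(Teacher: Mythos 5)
Your proposal is correct and follows essentially the same route as the paper: group the terms of $S(L;b)$ by the value of $k=\lfloor a/b\rfloor$ and expand each block sum $\sum_{kb\le a<(k+1)b}1/a$ by Euler--Maclaurin (equivalently, differencing the standard harmonic-number expansion at the endpoints $kb-1$ and $(k+1)b-1$), which is exactly how the paper produces the terms $\log\frac{(k+1)b-1}{kb-1}+\frac12F_1(k)-\frac1{12}F_2(k)+\frac1{120}F_4(k)\pm\cdots$ and then bounds the tail by $F_4(k)=O(k^{-4}b^{-4})$. Your additional remarks (the absence of an $F_3$-term because $B_3=0$, the explicit remainder bound, and the elementary telescoping alternative) only make explicit what the paper leaves implicit.
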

\begin{proof}
\begin{eqnarray}
S(L;b)&=&2b\sum_{1\leq a\leq L}\frac{1}{a}\left\lfloor \frac{a}{b}\right\rfloor=2b\sum_{k\leq L/b}k\sum_{kb\leq a<(k+1)b}\frac{1}{a}\nonumber\\
&=&2b\sum_{k\leq L/b}k\left(\log\frac{(k+1)b-1}{kb-1}+\frac{1}{2}F_1(k)-\frac{1}{12}F_2(k)+\frac{1}{120}F_4(k)\pm\cdots\right) .\nonumber
\end{eqnarray}
This proves the lemma, since $F_4(k)=O(k^{-4}b^{-4})$.
\end{proof}
\begin{lemma}$\label{x:B}$
Let
$$r(b)=\sum_{k\geq 1}k\left( \log\frac{(k+1)b-1}{kb-1}-\frac{1}{k}+\frac{1}{2k^2}-\frac{1}{bk^2} \right).$$
There is an absolute constant $C_0$, such that
$$r(b)=C_0+O(b^{-1}),$$
when $b$ tends to infinity.
\end{lemma}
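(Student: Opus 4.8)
The plan is to isolate the $b\to\infty$ limit of each summand and then bound the remainder uniformly in $k$ and $b$. The engine of the argument is the elementary identity
\[
\log\frac{(k+1)b-1}{kb-1}=\log\frac{k+1}{k}+\log\!\left(1+\frac{1}{(k+1)(kb-1)}\right),
\]
which I would verify from $\dfrac{\big((k+1)b-1\big)k}{(kb-1)(k+1)}=1+\dfrac{1}{(k+1)(kb-1)}$, using the factorization $k(k+1)b-(k+1)=(k+1)(kb-1)$. Substituting this into the definition of $r(b)$ and rewriting $k\cdot\big(-\tfrac1{bk^2}\big)=-\tfrac1{bk}$, I would split
\[
r(b)=\underbrace{\sum_{k\ge 1}k\!\left(\log\frac{k+1}{k}-\frac1k+\frac1{2k^2}\right)}_{=:\,C_0}\;+\;\sum_{k\ge1}\left(k\log\!\left(1+\frac{1}{(k+1)(kb-1)}\right)-\frac1{bk}\right).
\]
The first sum is independent of $b$ and converges, since $\log(1+1/k)-1/k+1/(2k^2)=O(k^{-3})$ makes its $k$-th term $O(k^{-2})$; I would take this sum as the absolute constant $C_0$ (an explicit value is not needed here).

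It then remains to show that the second series converges and is $O(b^{-1})$ as $b\to\infty$. I would assume $b\ge 2$, so that $kb-1\ge k\ge 1$ and in fact $kb-1\ge kb/2$ for all $k\ge 1$. Writing $x_k=\dfrac1{(k+1)(kb-1)}\in(0,\tfrac12]$ and using $\lvert\log(1+x)-x\rvert\le x^2/2$ for $x\ge 0$, I would estimate
\[
k\log(1+x_k)-\frac1{bk}=\left(\frac{k}{(k+1)(kb-1)}-\frac1{bk}\right)+O\!\left(\frac{k}{(k+1)^2(kb-1)^2}\right).
\]
A one-line computation gives $\dfrac{k}{(k+1)(kb-1)}-\dfrac1{bk}=\dfrac{k+1-kb}{bk(k+1)(kb-1)}$; since $0\le kb-k-1\le kb$ for $k\ge1$, $b\ge2$, its absolute value is at most $\dfrac{1}{(k+1)(kb-1)}\le\dfrac{2}{bk^2}$, and summing over $k$ contributes $O(b^{-1})$. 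The error term is $O\big(k(k+1)^{-2}(kb-1)^{-2}\big)=O(k^{-3}b^{-2})$, which sums to $O(b^{-2})$. Both bounds being uniform in $b$, the second series converges absolutely and equals $O(b^{-1})$, and combined with the previous display this yields $r(b)=C_0+O(b^{-1})$.

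I expect the main obstacle to be bookkeeping rather than any single hard estimate: one must keep $-\tfrac1{bk}$ paired with $k\log(1+x_k)$, because neither is summable on its own while their difference is only $O(k^{-2}b^{-1})$; and every implied constant must be checked to depend on neither $k$ nor $b$, for which the reduction $kb-1\ge kb/2$ (valid once $b\ge2$) is the crucial uniformizing step. If one prefers to avoid the displayed identity, a cosmetic alternative is to split the sum at $k\asymp\sqrt b$ and Taylor expand $\log\big(1+\tfrac{b}{kb-1}\big)$ directly, but the identity makes the relevant cancellation transparent and keeps the error terms clean.
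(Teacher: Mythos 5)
Your proof is correct, and it takes a genuinely different route from the paper's. The paper treats $b$ as a continuous variable: it asserts that $r$ is differentiable in $b$ with $\frac{dr(b)}{db}=O(b^{-2})$, and then writes $r(b)=r(2)+\int_2^{\infty}r'(t)\,dt+O(b^{-1})$, taking $C_0=r(2)+\int_2^{\infty}r'(t)\,dt$; the $O(b^{-2})$ bound on the derivative (which requires term-by-term differentiation of the series and a uniform estimate of the differentiated terms, of the same nature as your bounds) is stated without detail. You instead stay entirely discrete: the termwise identity
\[
\log\frac{(k+1)b-1}{kb-1}=\log\frac{k+1}{k}+\log\Bigl(1+\frac{1}{(k+1)(kb-1)}\Bigr)
\]
splits $r(b)$ into a $b$-independent absolutely convergent series, which you take as $C_0$, plus a remainder series whose terms you correctly keep paired as $k\log(1+x_k)-\frac{1}{bk}$ (each piece alone is not summable) and bound uniformly by $O(k^{-2}b^{-1})+O(k^{-3}b^{-2})$, giving $O(b^{-1})$ in total. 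Your estimates check out: the identity, the algebra $\frac{k}{(k+1)(kb-1)}-\frac1{bk}=\frac{k+1-kb}{bk(k+1)(kb-1)}$, and the uniformization $kb-1\ge kb/2$ for $b\ge2$ are all valid, and the termwise splitting is legitimate since both resulting series converge. What each approach buys: the paper's argument is shorter as written but leaves the justification of differentiating under the sum implicit and only characterizes $C_0$ through $r(2)$ and an improper integral; yours is self-contained, never needs $r$ extended to real $b$, makes every implied constant visibly uniform, and as a bonus produces $C_0$ as the explicit convergent series $\sum_{k\ge1}k\bigl(\log\frac{k+1}{k}-\frac1k+\frac1{2k^2}\bigr)$, which could even be evaluated in closed form by partial summation and Stirling's formula, something the paper's formulation does not directly offer.
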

\begin{proof}
The function $r$ is differentiable with respect to $b$, with
\begin{eqnarray}
\frac{dr(b)}{db}=O(b^{-2}).\nonumber
\end{eqnarray}
Thus
\begin{eqnarray}
r(b)&=&r(2)+\int_2^{\infty}\frac{dr(t)}{dt}dt+O(b^{-1}),\nonumber\\
&=&C_0+O(b^{-1}),\nonumber
\end{eqnarray}
where
$$C_0=r(2)+\int_2^{\infty}\frac{dr(t)}{dt}dt.$$
The improper integral exists since
$$\frac{dr(b)}{db}=O(b^{-2}).$$
This completes the proof of the lemma.
\end{proof}
\begin{lemma}$\label{x:C}$ For large integer values of $k$ and $b$ we have

(i) $$\frac{1}{2}F_1(k)=-\frac{1}{2k^2b}+\frac{1}{2k^3b}-\frac{1}{k^3b^2}+O\left(\frac{1}{k^4b} \right)$$
(ii) $$-\frac{1}{12}F_2(k)=\frac{1}{6k^3b^2}-\frac{1}{4k^4b^2}+\frac{1}{2k^4b^3} +O\left(\frac{1}{k^5b^2} \right) ,$$
where $F_i(k)$ is defined as in Lemma $\ref{x:A}$.
\end{lemma}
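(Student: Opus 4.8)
The plan is to rewrite each $F_i(k)$ as a single rational function, pull out the dominant power of $kb$, and expand what remains by the geometric series $\frac{1}{1+x}=1-x+O(x^2)$; reading off the first few terms and collecting the rest into the error term then gives both formulas.

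For (i), a common denominator gives the closed form
$$F_1(k)=\frac{(kb-1)-\bigl((k+1)b-1\bigr)}{\bigl((k+1)b-1\bigr)(kb-1)}=\frac{-b}{\bigl((k+1)b-1\bigr)(kb-1)}.$$
Now $\bigl((k+1)b-1\bigr)(kb-1)=(kb)^2\bigl(1+\tfrac1k-\tfrac1{kb}\bigr)\bigl(1-\tfrac1{kb}\bigr)$, and multiplying out the two bracketed factors shows this equals $(kb)^2\bigl(1+\tfrac1k-\tfrac2{kb}+O(k^{-2}b^{-1})\bigr)$. Hence
$$\tfrac12F_1(k)=-\frac{1}{2k^2b}\cdot\frac{1}{\,1+\tfrac1k-\tfrac2{kb}+O(k^{-2}b^{-1})\,}=-\frac{1}{2k^2b}\Bigl(1-\tfrac1k+\tfrac2{kb}+O(k^{-2})\Bigr),$$
and distributing the prefactor produces exactly (i), with remainder $O(k^{-4}b^{-1})$.

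For (ii) the same scheme applies, starting from the difference-of-squares identity
$$F_2(k)=\frac{(kb-1)^2-\bigl((k+1)b-1\bigr)^2}{\bigl((k+1)b-1\bigr)^2(kb-1)^2}=\frac{-b(2kb+b-2)}{\bigl((k+1)b-1\bigr)^2(kb-1)^2}.$$
Factoring $-2kb^2$ from the numerator (giving $-2kb^2(1+\tfrac1{2k}-\tfrac1{kb})$) and $(kb)^4\bigl(1+\tfrac2k-\tfrac4{kb}+O(k^{-2})\bigr)$ from the denominator, one obtains
$$-\tfrac1{12}F_2(k)=\frac{1}{6k^3b^2}\cdot\frac{1+\tfrac1{2k}-\tfrac1{kb}}{\,1+\tfrac2k-\tfrac4{kb}+O(k^{-2})\,}=\frac{1}{6k^3b^2}\Bigl(1-\tfrac3{2k}+\tfrac3{kb}+O(k^{-2})\Bigr),$$
and distributing $\tfrac1{6k^3b^2}$ gives (ii) with remainder $O(k^{-5}b^{-2})$.

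I expect the only real work to be the bookkeeping in these two expansions: one has to verify that every term suppressed into an $O(\cdot)$ really is of the claimed order — in particular that no stray term of size $k^{-3}b^{-1}$ or $k^{-4}b^{-1}$ (respectively $k^{-4}b^{-2}$ or $k^{-5}b^{-2}$) is hidden in the geometric-series tail. This uses that $k$ and $b$ are large, so the series converge, and that the prefactors $\tfrac1{2k^2b}$ and $\tfrac1{6k^3b^2}$ already carry the relevant powers of $b$. No single algebraic manipulation is hard; the care is entirely in the truncation.
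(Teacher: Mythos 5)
Your proposal is correct: both expansions, the factorizations of numerator and denominator, and the error bookkeeping (relative error $O(k^{-2})$ against prefactors $\frac{1}{2k^2b}$ and $\frac{1}{6k^3b^2}$, giving $O(k^{-4}b^{-1})$ and $O(k^{-5}b^{-2})$) all check out. The route differs from the paper's in how the cancellation is handled: the paper expands the two reciprocals separately, writing $\frac{1}{(k+1)b-1}=\frac{1}{kb}\cdot\frac{1}{1+\frac{b-1}{kb}}$ and $\frac{1}{kb-1}=\frac{1}{kb}\cdot\frac{1}{1-\frac{1}{kb}}$ (and, for (ii), the squared analogues via $\frac{1}{(1+x)^2}=1-2x+3x^2-\cdots$), then subtracts the two series and lets the leading terms cancel inside the arithmetic; you instead perform the cancellation exactly at the outset, via a common denominator and the difference of squares, obtaining the closed forms $F_1(k)=\frac{-b}{((k+1)b-1)(kb-1)}$ and $F_2(k)=\frac{-b(2kb+b-2)}{((k+1)b-1)^2(kb-1)^2}$, and then make a single geometric expansion of the reciprocal of the denominator. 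The underlying tool is the same elementary expansion in powers of $1/k$ and $1/(kb)$, so this is a reorganization rather than a new idea; what it buys you is that the order of magnitude of each $F_i$ is visible before any series is truncated, so no delicate cancellation between two separately truncated series has to be tracked, at the modest cost of multiplying out the bracketed factors of the denominator. Either way the verification that nothing of size $k^{-3}b^{-1}$ (resp. $k^{-4}b^{-2}$) hides in the tail is exactly the bookkeeping you describe, and you have carried it out correctly.
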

\begin{proof}
(i)
\begin{eqnarray}
\frac{1}{2}F_1(k)&=&\frac{1}{2}\left(\frac{1}{kb}\cdot\frac{1}{1+\frac{b-1}{kb}}-\frac{1}{kb}\cdot\frac{1}{1-\frac{1}{kb}}\right)\nonumber  \\
&=&\frac{1}{2kb}\left( 1-\frac{b-1}{kb}+\left(\frac{b-1}{kb} \right)^2-\left(\frac{b-1}{kb} \right)^3+O\left(\frac{1}{k^4} \right) \right)\nonumber\\
&-&\frac{1}{2kb}\left(1+\frac{1}{kb}+\frac{1}{k^2b^2}+\frac{1}{k^3b^3}+O\left( \frac{1}{k^4b^4}\right) \right) \nonumber\\
&=&-\frac{1}{2k^2b}+\frac{1}{2k^3b}-\frac{1}{k^3b^2}+O\left(\frac{1}{k^4b} \right)\nonumber
\end{eqnarray}
(ii) By calculating the second derivative of the Taylor expansion of $\log(1+x)$ around $x=0$, it follows that
$$\frac{1}{(1+x)^2}=1-2x+3x^2-4x^3+O(x^4),\ \text{for}\ |x|<1.$$
Hence,
\begin{eqnarray}
&-&\frac{1}{12}F_2(k)=-\frac{1}{12}\left( \frac{1}{(kb)^2}\cdot\frac{1}{\left(1+\frac{b-1}{kb} \right)^2}-\frac{1}{(kb)^2}\cdot\frac{1}{\left(1-\frac{1}{kb} \right)^2}\right)\nonumber\\
&=&-\frac{1}{12(kb)^2}\left(1-2\frac{b-1}{kb}+3\left(\frac{b-1}{kb} \right)^2-4\left(\frac{b-1}{kb} \right)^3+O\left( \frac{1}{k^4}\right)  \right)\nonumber\\
&+&\frac{1}{12(kb)^2}\left(1+2\frac{1}{kb}+3\frac{1}{(kb)^2}+4\frac{1}{(kb)^3}+O\left( \frac{1}{k^4b^4}\right)  \right)\nonumber\\
&=&\frac{1}{6k^3b^2}-\frac{1}{4k^4b^2}+\frac{1}{2k^4b^3} +O\left(\frac{1}{k^5b^2} \right)\nonumber
\end{eqnarray}
\end{proof}
\noindent By the use of the previous results, we shall prove the following lemma.
\begin{lemma}$\label{x:D}$
We have
$$S(L;b)=2bC_0+2L+(1-b)\log\frac{L}{b}+(1-b)\gamma+O\left(\frac{b^2}{L} \right)+O(1).$$
\end{lemma}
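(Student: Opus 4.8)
The plan is to start from the formula for $S(L;b)$ in Lemma \ref{x:A}, split the sum over $k\le L/b$ into four pieces — the ``logarithm'' sum $\sum_{k\le L/b} k\log\frac{(k+1)b-1}{kb-1}$, the $F_1$-sum $\sum_{k\le L/b}k\cdot\frac12 F_1(k)$, the $F_2$-sum $\sum_{k\le L/b}k\cdot(-\frac1{12})F_2(k)$, and the sum of the $O(k^{-4}b^{-4})$ errors — and estimate each piece after multiplication by $2b$. The error piece is immediate: $2b\sum_{k\le L/b}k\cdot O(k^{-4}b^{-4})=O(b^{-3})=O(1)$.

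For the logarithm sum the idea is to feed it into Lemma \ref{x:B}. Inside the sum I add and subtract $\frac{1}{k}-\frac{1}{2k^2}+\frac{1}{bk^2}$, so that
$$\sum_{k\le L/b}k\log\frac{(k+1)b-1}{kb-1}=\sum_{k\le L/b}k\Big(\log\frac{(k+1)b-1}{kb-1}-\frac{1}{k}+\frac{1}{2k^2}-\frac{1}{bk^2}\Big)+\sum_{k\le L/b}\Big(1-\frac{1}{2k}+\frac{1}{bk}\Big).$$
The first sum on the right equals $r(b)$ minus its tail $\sum_{k>L/b}$. A two-line Taylor expansion of $\log\frac{(k+1)b-1}{kb-1}=\log\big(1+\frac{1}{k-1/b}\big)$ shows that the $k$-th summand of $r(b)$ is $O(k^{-2})$ uniformly in $b\ge2$, so the tail is $O(b/L)$; with Lemma \ref{x:B} this gives $2b$ times the first sum equal to $2bC_0+O(1)+O(b^2/L)$. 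The second sum is elementary: using $b\mid L$ we have $\sum_{k\le L/b}1=L/b$, and using $(2)$ for $\sum_{k\le L/b}1/k$ one obtains that $2b$ times it equals $2L+(2-b)\log\frac{L}{b}+(2-b)\gamma+O(b^2/L)$.

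For the $F_1$-sum I substitute Lemma \ref{x:C}(i): the leading term $k\cdot(-\frac{1}{2k^2b})=-\frac{1}{2kb}$, once multiplied by $2b$ and summed against $(2)$, produces exactly $-\log\frac{L}{b}-\gamma+O(b/L)$, while each of the remaining terms of \ref{x:C}(i), after multiplication by $2bk$ and summation over $k\le L/b$, is an absolutely convergent series times a bounded factor, hence $O(1)$. An entirely analogous computation with Lemma \ref{x:C}(ii) shows $2b$ times the $F_2$-sum is $O(1)$. Adding the four contributions, the coefficient of $\log\frac{L}{b}$ becomes $(2-b)+(-1)=(1-b)$, and likewise the coefficient of $\gamma$, while all error terms collapse into $O(b^2/L)+O(1)$, which is the assertion of Lemma \ref{x:D}.

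The bookkeeping is the only genuine obstacle: each of the four pieces carries a sub-term of size comparable to $\log\frac{L}{b}$ or to $b$, and the stated coefficient $(1-b)$ emerges only after the $\log\frac{L}{b}$ and $\gamma$ contributions of the logarithm sum and of the $F_1$-sum are combined, so nothing may be discarded prematurely and one must carefully separate terms that are truly $O(1)$ from those that are only $O(b^2/L)$. The single non-mechanical input is the uniform bound $O(k^{-2})$ on the $k$-th term of $r(b)$, needed to control its tail at $k=L/b$.
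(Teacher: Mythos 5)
Your proposal is correct and follows essentially the same route as the paper: both add and subtract $\frac{1}{k}-\frac{1}{2k^2}+\frac{1}{bk^2}$ inside the sum so that Lemma \ref{x:B} handles the $r(b)$-part, and then evaluate the leftover terms together with the expansions of Lemma \ref{x:C} using formula (2), the $(1-b)$ coefficients arising from the same cancellation between the logarithm piece and the leading $F_1$ term. The only difference is that you explicitly justify replacing the partial sum of $r(b)$ by $r(b)$ itself via the uniform $O(k^{-2})$ bound on its summands (tail $O(b/L)$, hence $O(b^2/L)$ after multiplying by $2b$), a detail the paper leaves implicit.
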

\begin{proof}
\noindent By Lemmas $\ref{x:A}$, $\ref{x:B}$ and $\ref{x:C}$ we get
\begin{eqnarray}
S(L;b)&=&2b\sum_{k\leq L/b}k\left(\log\frac{(k+1)b-1}{kb-1}-\frac{1}{k}+\frac{1}{2k^2}-\frac{1}{bk^2} \right)\nonumber\\
&+&2b\sum_{k\leq L/b}k\left(\frac{1}{k}-\frac{1}{2k^2}+\frac{1}{bk^2} +\frac{1}{2}F_1(k)-\frac{1}{12}F_2(k)+O\left(\frac{1}{k^4b^4} \right)\right)\nonumber\\
&=&2b\sum_{k\leq L/b}k\left(\frac{1}{k}-\frac{1}{2k^2}+\frac{1}{bk^2}-\frac{1}{2k^2b}+\frac{1}{2k^3b}-\frac{1}{k^3b^2}+O\left(\frac{1}{k^4b} \right)+\frac{1}{6k^3b^2}+O\left(\frac{1}{k^4b^2} \right)\right)\nonumber\\
&+&2bC_0+O(1)\nonumber\\
&=&2bC_0+2L-b\left(\log\frac{L}{b}+\gamma+O\left(\frac{b}{L} \right)\right)+\log\frac{L}{b}+\gamma+O\left(\frac{b}{L} \right)+O(1)\nonumber\\
&=&2bC_0+2L+(1-b)\log\frac{L}{b}+(1-b)\gamma+O\left(\frac{b^2}{L} \right)+O(1).\nonumber
\end{eqnarray}
\end{proof}
\noindent By Lemma $\ref{x:G}$ and Lemma $\ref{x:D}$ we obtain the following proposition.
\begin{proposition}
For integer values of $b$, such that $b|L$, we have
$$G_L(b)=b\log b+2bC_0+O\left( \frac{b}{L} \right)+O\left(\frac{b^2}{L} \right)+O(1).$$
\end{proposition}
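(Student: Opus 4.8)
\emph{Proof proposal.} The plan is to feed the asymptotic expansion of $S(L;b)$ from Lemma~\ref{x:D} into the identity for $G_L(b)$ furnished by Lemma~\ref{x:G}, and then simply collect like terms. First I would recall that the sum $2b\sum_{1\le a\le L}\tfrac1a\lfloor a/b\rfloor$ occurring in Lemma~\ref{x:G} is exactly $S(L;b)$, so that lemma reads
$$G_L(b)=-\log\frac{L}{b}+b\log L+b\gamma-2L+S(L;b)+O\!\left(\frac{b}{L}\right).$$

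Next I would substitute $S(L;b)=2bC_0+2L+(1-b)\log\frac{L}{b}+(1-b)\gamma+O\!\left(\frac{b^2}{L}\right)+O(1)$ from Lemma~\ref{x:D}. The terms $-2L$ and $+2L$ cancel outright. The logarithmic contributions combine as $-\log\frac{L}{b}+(1-b)\log\frac{L}{b}=-b\log\frac{L}{b}=-b\log L+b\log b$, and the $-b\log L$ here kills the $+b\log L$ term, leaving precisely $b\log b$. The constant multiples of $\gamma$ combine to $b\gamma+(1-b)\gamma=\gamma$, which is $O(1)$; absorbing this together with the remaining bounded pieces into $O(1)$ gives
$$G_L(b)=b\log b+2bC_0+O\!\left(\frac{b}{L}\right)+O\!\left(\frac{b^2}{L}\right)+O(1),$$
as claimed.

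There is no real obstacle: the proposition is a bookkeeping consequence of Lemmas~\ref{x:G} and~\ref{x:D}. The only points requiring care are (i) identifying the sum in Lemma~\ref{x:G} with $S(L;b)$ so that the substitution is legitimate, and (ii) verifying that every unbounded term — namely $b\log L$, the two copies of $\pm 2L$, and $\pm b\log(L/b)$ — is annihilated in the cancellation, so that only $b\log b$, $2bC_0$, and error terms of size $O(1)$, $O(b/L)$, $O(b^2/L)$ survive.
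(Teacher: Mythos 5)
Your proposal is correct and is exactly the paper's argument: the paper simply states that the proposition follows from Lemmas \ref{x:G} and \ref{x:D}, and your computation supplies the straightforward cancellation of the $\pm 2L$, $b\log L$, and $b\log(L/b)$ terms that this entails. Nothing is missing.
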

\noindent However, by the definition of $G_L(b)$ it is evident that
$$c_0\left(\frac{1}{b} \right)=\frac{1}{\pi}\lim_{L\rightarrow+\infty}G_L(b)$$
and thus by the previous proposition, we obtain the following theorem
\begin{theorem}
For integer values of $b$, we have
$$c_0\left(\frac{1}{b} \right)=\frac{1}{\pi}b\log b+\frac{2bC_0}{\pi}+O(1).$$
\end{theorem}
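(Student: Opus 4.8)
The plan is to obtain the theorem as an immediate consequence of the Proposition that precedes it, by letting the auxiliary truncation parameter $L$ run to infinity. Recall first that, combining Proposition \ref{x:ena} with the rearrangement of (1) obtained by substituting $\{a/b\}=a/b-\lfloor a/b\rfloor$, one has
$$c_0\left(\frac{1}{b}\right)=\frac{1}{\pi}\lim_{L\to\infty}G_L(b),\qquad G_L(b)=\sum_{\substack{1\le a\le L\\ b\nmid a}}\left(\frac{b}{a}\left(1+2\left\lfloor\frac{a}{b}\right\rfloor\right)-2\right).$$
So everything reduces to evaluating $\lim_{L\to\infty}G_L(b)$ for each fixed integer $b\ge 2$.

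I would first check that this limit genuinely exists. Grouping the terms of the series into blocks $kb\le a<(k+1)b$ and writing $a=kb+j$, the $k$-th block ($k\ge 1$) equals $\sum_{j=1}^{b-1}\frac{b-2j}{kb+j}$; since $\sum_{j=1}^{b-1}(b-2j)=0$, expanding $\frac{1}{kb+j}$ in powers of $1/(kb)$ shows this block is $O(k^{-2})$ (with implied constant depending on $b$). Hence the series $\sum_{a\ge 1,\,b\nmid a}(\cdots)$ converges, its sum may be computed along any sequence of $L$'s, and in particular $\lim_{L\to\infty}G_L(b)$ exists and equals the limit taken over $L$ divisible by $b$. (Alternatively, one may simply note that $c_0(1/b)$ is a finite cotangent sum and invoke Proposition \ref{x:ena} directly.)

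Now I would restrict to $L$ a multiple of $b$ and apply the Proposition: for every such $L$,
$$G_L(b)=b\log b+2bC_0+O\left(\frac{b}{L}\right)+O\left(\frac{b^2}{L}\right)+O(1),$$
where — and this is the crucial point — the constant implicit in the $O(1)$ term is absolute. Fixing $b$ and letting $L\to\infty$, the terms $O(b/L)$ and $O(b^2/L)$ tend to $0$, so $\lim_{L\to\infty}G_L(b)=b\log b+2bC_0+O(1)$ with the same absolute implied constant. Multiplying through by $1/\pi$ yields $c_0(1/b)=\frac{1}{\pi}b\log b+\frac{2bC_0}{\pi}+O(1)$, which is the assertion.

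The delicate point is not this limiting passage, which is routine, but the uniformity of the error term in the Proposition: if the constant hidden in $O(1)$ were allowed to depend on $b$, then after letting $L\to\infty$ one would be left with a bound no better than $O(b)$ and nothing would be gained over Vasyunin's Theorem \ref{x:vas}. That uniformity has, however, already been secured upstream — in the absolute constant $C_0$ furnished by Lemma \ref{x:B} and the $b$-uniform error estimates of Lemmas \ref{x:C} and \ref{x:D} — so here it only remains to read off the value of the limit.
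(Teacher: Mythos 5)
Your proposal is correct and follows essentially the same route as the paper, which likewise obtains the theorem by writing $c_0(1/b)=\frac{1}{\pi}\lim_{L\to\infty}G_L(b)$ and letting $L\to\infty$ (along multiples of $b$) in the preceding proposition. Your added verification that the series defining the limit converges, and your remark that the $O(1)$ must be uniform in $b$, are sound points of care that the paper leaves implicit.
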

\noindent But, by Vasyunin's theorem, we know that for large integer values of $b$ it holds
$$c_0\left(\frac{1}{b} \right)=\frac{1}{\pi}b\log b-\frac{b}{\pi}(\log 2\pi-\gamma)+O(\log b).$$
Therefore,
$$2C_0=\gamma-\log 2\pi$$
and hence we obtain the following corollary.
\begin{cor}
For large integer values of $b$, we have
$$c_0\left(\frac{1}{b} \right)=\frac{1}{\pi}b\log b-\frac{b}{\pi}(\log 2\pi-\gamma)+O(1).$$
\end{cor}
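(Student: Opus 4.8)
The plan is to prove the Corollary in two stages: first establish the asymptotic $c_0(1/b) = \frac{1}{\pi} b\log b + \frac{2bC_0}{\pi} + O(1)$ with some \emph{absolute} constant $C_0$ not yet identified, and then pin down $C_0$ by comparing with Vasyunin's Theorem \ref{x:vas}, which forces $2C_0 = \gamma - \log 2\pi$. The substantive work is entirely in the first stage; once that is in hand the second stage is a one-line consistency argument, since the two asymptotic formulas must agree and the difference of their main terms is $\frac{b}{\pi}(2C_0 + \log 2\pi - \gamma) + O(\log b)$, which can be $o(b\log b)$ only if the bracket vanishes.

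For the first stage I would proceed as follows. Start from the closed form for $x_n = \{na/b\}$ obtained by expanding $\lfloor na/b\rfloor$ via the finite geometric-sum/cotangent identity, and use the reality of $x_n$ together with $\sum_{m=1}^{b-1} e^{2\pi i mna/b} \in \{-1, b-1\}$ to get Proposition~2.1 and hence the representation $x_1 = \tfrac12 - \tfrac{1}{2b}\sum_{m=1}^{b-1}\cot(\pi m/b)\sin(2\pi m a/b)$ valid when $b \nmid a$. Dividing by $a$, summing over $a$, and invoking the classical Fourier series (S) converts $c_0(1/b)$ into the Dirichlet-type series of Proposition~\ref{x:ena}, i.e. $c_0(1/b) = \frac{1}{\pi}\sum_{b\nmid a} \frac{b(1-2\{a/b\})}{a}$. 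Rewriting $\{a/b\} = a/b - \lfloor a/b\rfloor$ turns this into $\frac{1}{\pi}\lim_{L\to\infty} G_L(b)$, and peeling off the terms with $b \mid a$ (which contribute a harmonic sum $\frac{1}{b}\sum_{k\le L/b} \frac{1}{k} = \frac{1}{b}\log(L/b) + O(1/b)$) gives Lemma~\ref{x:G}. The crux is then to estimate $S(L;b) = 2b\sum_{a\le L} \frac{1}{a}\lfloor a/b\rfloor$ up to an $O(1)$ error: group the $a$'s into blocks $kb \le a < (k+1)b$, so that $\sum_{kb\le a<(k+1)b} \frac{1}{a}$ is (by the Euler–Maclaurin/$\psi$-function expansion used in Lemma~\ref{x:A}) $\log\frac{(k+1)b-1}{kb-1} + \tfrac12 F_1(k) - \tfrac1{12} F_2(k) + O(k^{-4}b^{-4})$; then expand $\tfrac12 F_1(k)$ and $-\tfrac1{12}F_2(k)$ in descending powers of $1/(kb)$ as in Lemma~\ref{x:C}; and, decisively, split off the piece $2b\sum_{k\ge 1} k\big(\log\frac{(k+1)b-1}{kb-1} - \frac1k + \frac1{2k^2} - \frac1{bk^2}\big)$, which is $L$-independent and, by differentiating in $b$ and integrating $\frac{dr}{dt} = O(t^{-2})$ from $2$ to $\infty$ (Lemma~\ref{x:B}), equals $2bC_0 + O(1)$. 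Reassembling all the remaining sums over $k \le L/b$ — each a convergent or $\log$-type sum producing the expected $2L$, $(1-b)\log(L/b)$, $(1-b)\gamma$ contributions — yields Lemma~\ref{x:D}, and combining with Lemma~\ref{x:G} gives $G_L(b) = b\log b + 2bC_0 + O(b/L) + O(b^2/L) + O(1)$; letting $L \to \infty$ (legitimate since the series defining $c_0(1/b)$ converges) removes the $L$-dependent errors and delivers the Theorem.

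The main obstacle, and the reason Vasyunin's $O(\log b)$ can be sharpened to $O(1)$, is the bookkeeping of the nested error terms in both variables $k$ and $b$: one must extract the logarithm and the first few negative powers of $kb$ \emph{before} summing over $k$, so that the leftover errors are of the form $\sum_{k} O(k^{-4}b^{-c})$, which converge to $O(1)$ rather than accumulating a factor $\log(L/b)$ or $\log b$. A secondary point requiring care is the existence and $b$-uniformity of the constant $C_0$, i.e. showing $r(b) = C_0 + O(b^{-1})$ via the bound $dr/db = O(b^{-2})$ — this is what guarantees that the coefficient of $b$ in the asymptotic is genuinely constant, which is precisely the input that lets the comparison with Theorem~\ref{x:vas} determine $C_0 = \tfrac12(\gamma - \log 2\pi)$ and complete the proof of the Corollary.
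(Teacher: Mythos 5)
Your proposal is correct and follows essentially the same route as the paper: the same chain of reductions (Proposition~\ref{x:ena}, Lemma~\ref{x:G}, the block decomposition and expansions of Lemmas~\ref{x:A}--\ref{x:C}, the extraction of the $L$-independent constant $C_0$ via Lemma~\ref{x:B}, and Lemma~\ref{x:D}), followed by the identification $2C_0=\gamma-\log 2\pi$ through comparison with Vasyunin's Theorem~\ref{x:vas}. Nothing essential differs from the paper's argument.
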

\noindent The above corollary improves Vasyunin's Theorem $\ref{x:vas}$.\\ \\
\textbf{Acknowledgments.} I would like to thank my Ph.D. advisor Professor E. Kowalski, who proposed to me this area of research and for very helpful discussions. I would also like to thank Dr. H. Bui
as well as Professors H. Maier and L. Toth for reading the manuscript and for their useful remarks.
\vspace{10mm}

\end{document}